\numberwithin{equation}{section}
\def\bel{\begin{equation*} \begin{aligned}}
\def\eel{\end{aligned} \end{equation*}}
\def\beln{\begin{equation} \begin{aligned}}
\def\eeln{\end{aligned} \end{equation}}
\def\beq{\begin{equation}}
\def\eeq{\end{equation}}
\newcommand{\bea}{\begin{eqnarray}}
\newcommand{\eea}{\end{eqnarray}}
\newtheorem{theorem}{Theorem}[section]
\newtheorem{lemma}[theorem]{Lemma}
\def\rit{{\mathbb{R}}}
\def\cit{{\mathbb{C}}}
\def\Ai{\operatorname{Ai}}
\def\Ti{\operatorname{Ti}}
\begin{document}

\centerline{\Large \bf The dispersion relation of Tollmien-Schlichting waves} 

\bigskip

\centerline{Dongfen Bian\footnote{School of Mathematics and Statistics, Beijing Institute of Technology, $100081$, Beijing, China. Emails:
biandongfen@bit.edu.cn, daishoui@outlook.com and emmanuelgrenier@bit.edu.cn}, Shouyi Dai\footnotemark[1],
Emmanuel Grenier\footnotemark[1] 
}


\subsubsection*{Abstract}


It is well-known that shear flows in a strip or in the half plane are unstable for the Navier-Stokes
equations if the viscosity $\nu$ is small enough, provided the horizontal wave number $\alpha$
lies in a small interval, between the so called lower and upper marginal stability curves.
The corresponding instabilities are called Tollmien-Schlichting waves.
In this letter, we give a simple presentation of the dispersion relation of these waves 
and study its mathematical properties.


\section{Introduction}


In this paper, we address the classical question of the stability of shear flows for the following
incompressible Navier-Stokes equations in the half space $\Omega = \rit \times \rit_+$,
 \beq \label{NS1} 
\partial_t u  + (u  \cdot \nabla) u  - \nu \Delta u  + \nabla p  = f ,
\eeq
\beq \label{NS2}
\nabla \cdot u  = 0 ,
\eeq
together with the Dirichlet boundary condition $u  = 0$ when $y = 0$.

In these equations, $u$ is the velocity of the fluid, $p$ its pressure and $f$ a given external force. 
A shear flow is a stationary solution of (\ref{NS1},\ref{NS2}) of the form
$$
U(y) =  \Bigl(U_s(y),0 \Bigr), \qquad 
f  = \Bigl( - \nu \Delta U_s(y),0 \Bigr),
$$
where $U_s(y)$ is a smooth function, vanishing at $y = 0$ and converging exponentially fast
at infinity to some non zero constant $U_+$. We also assume that $U_s'(y)$ and $U_s''(y)$ converge exponentially fast
to $0$ as $y$ goes to infinity.

We will also consider the case of the strip $\Omega = \mathbb{R} \times [-1,+1]$
with the corresponding boundary condition $u  = 0$ at $y = \pm 1$.
In this case we will restrict ourselves to even solutions.
A shear flow is then a smooth function of $y$ which is even and vanishes at $y = \pm 1$.

\medskip 

It is well known in physics that such shear flows are spectrally unstable (namely, the
corresponding linearized equations admit an exponentially growing solution) provided
the Reynolds number $\mathfrak{R} = \nu^{-1}$ is large enough and provided
the horizontal wave number of the perturbation lies in some interval $[\alpha_-(\nu),\alpha_+(\nu)]$
depending on the viscosity.
Such instabilities are called Tollmien-Schlichting waves.
We in particular refer to \cite{Reid,Reid2,Schmidt} or to \cite{BG1,Zhang,Guo} for detailed discussions
from the physical or mathematical points of view. 

\medskip

It turns out that the dispersion relation of  the Tollmien-Schlichting waves is extremely delicate to obtain. 
The first aim of this letter is to give a simple and educational way to obtain it, without proof
(which can be found in \cite{BG1,Zhang,Guo}).
The second aim is to describe in details the corresponding instabilities through various asymptotics 
which are usually not considered in physics (where  only the behaviors of $\alpha_-(\nu)$ and $\alpha_+(\nu)$ 
are usually
studied).


\section{The dispersion relation}



\subsection{The Orr-Sommerfeld and Rayleigh equations}


The linearized Navier-Stokes equations near the shear flow $U$ are
\beq \label{LNS}
\partial_t v + (U \cdot \nabla) v + (v \cdot \nabla) U - \nu \Delta v + \nabla q = 0,
\eeq
\beq \label{LNS2}
\nabla \cdot v = 0
\eeq
with the boundary condition $v = 0$ on $\partial \Omega$.
We introduce the stream function of $v$ and take its Fourier transform in the $x$ variable with dual variable $\alpha$. Looking for instabilities of the linearized Navier-Stokes equations is then equivalent to looking for solutions of the form 
$$
\zeta(x,y) = \nabla^\perp \Bigl[ e^{i \alpha (x - ct)}  \psi(y) \Bigr]
$$
where $c \in \mathbb{C}$ with $\Im c > 0$.
Taking the curl of (\ref{LNS})
and dividing by $i \alpha$, we obtain the classical Orr-Sommerfeld equation on $\psi$
\beq \label{OS}
(U_s - c)  (\partial_y^2 - \alpha^2) \psi - U_s''  \psi  
- { \nu \over i \alpha}   (\partial_y^2 - \alpha^2)^2 \psi = 0
\eeq
with, in the half space case,  the boundary conditions $\psi(0) = \partial_y \psi(0) = 0$
and $\psi(y) \to 0$ as $y$ goes to infinity.

When $\nu = 0$, the Orr-Sommerfeld equation degenerates into the Rayleigh equation
\beq \label{Rayleigh}
(U_s - c) (\partial_y^2 - \alpha^2) \psi = U_s'' \psi,
\eeq
together with the boundary conditions $\psi(0) = 0$ and $\psi(y) \to 0$ as $y \to + \infty$.


\subsection{The dispersion relation}


Let us first discuss the half space case. At infinity, classical results on ordinary differential equations indicate
that solutions $\psi$ of the Orr-Sommerfeld equation  behave like $\exp(\lambda y)$ where
$$
\Bigl[ U_+ - {\nu \over i \alpha} (\lambda^2 - \alpha^2) \Bigr] (\lambda^2 - \alpha^2) = 0,
$$
namely where $\lambda = \pm \lambda_s$ or $\lambda = \pm \lambda_f$ with
$$
\lambda_s = |\alpha| \in \rit , \qquad
\lambda_f = \sqrt{ \alpha^2 + {i \alpha \over \nu} U_+} \in \cit .
$$
As $U_+ \ne 0$, we note that $|\lambda_f | \gg 1$ when $\nu$ goes to $0$ provided $\alpha / \nu \to + \infty$, 
which will be the case for the Tollmien-Schlichting waves.
Thus the Orr-Sommerfeld equation has four independent solutions.
Two of them, called $\psi_{f,\pm}$, have a ``fast behavior" and behave like $\exp( \pm \lambda_f y)$ at infinity.
Two of them, called $\psi_{s,\pm}$, have a ``slow behavior" and behave like $\exp(\pm \lambda_s y)$ at infinity.
Moreover, two of them, called $\psi_{f,-}$ and $\psi_{s,-}$ converge to $0$ as $y \to + \infty$,
and two of them, called $\psi_{f,+}$ and $\psi_{s,+}$ diverge at infinity.

We look for a non zero solution of (\ref{OS}) which go to $0$ as infinity and satisfies the boundary condition at $y = 0$. Such a solution
is necessarily a linear combination of $\psi_{f,-}$ and $\psi_{s,-}$, and exists if only if the following
dispersion relation holds true
\beq \label{disper1}
{\partial_y \psi_{f,-}(0) \over \psi_{f,-}(0)}
= {\partial_y \psi_{s,-}(0) \over \psi_{s,-}(0)}.
\eeq
In the strip, we only consider even solutions, and there exist two independent even solutions,
one, called $\psi_f$, being fast, and the other one, called $\psi_s$, being slow.
The dispersion relation is then
\beq \label{disper2}
{\partial_y \psi_{f}(1) \over \psi_{f}(1)}
= {\partial_y \psi_{s}(1) \over \psi_{s}(1)}.
\eeq
We now describe the slow and fast solutions.


\subsection{Study of the fast solution}


Let us first discuss the half space case and let us describe $\psi_{f,-}$. For a fast solution, we expect that
the gradients are very large. In this case, it can be established that $\psi_{f,-}$ can be approximated by
a solution of
\beq \label{OSfast}
(U_s - c) \partial_y^2 \psi - {\nu \over i \alpha} \partial_y^4 \psi = 0,
\eeq
just keeping the highest order derivatives in (\ref{OS}). We note that $\theta = \partial_y^2 \psi$
satisfies the Airy type equation
\beq \label{OSfast2}
(U_s - c) \theta =  {\nu \over i \alpha} \partial_y^2 \theta.
\eeq
We further approximate $\theta$ by a solution of the classical linear Airy equation
\beq \label{OSfast3}
\partial_y^2 \theta = {i \alpha \over \nu} U_s'(y_c) (y - y_c ) \theta,
\eeq
where $y_c$ is the so-called critical layer, defined by $U_s(y_c) = c$.
The solution of (\ref{OSfast3}) which decays at infinity is then simply given by
$\Ai( \gamma (y - y_c))$ where $\Ai$ is the Airy function 
and where
$$
\gamma = i^{1/3} \tilde \gamma, \qquad
\tilde \gamma  = \Bigl(  {\alpha U_s'(y_c) \over \nu} \Bigr)^{1/3}.
$$
More precisely, it is possible to prove that
$$
\frac{\partial_y \psi_{f,-}(0)}{ \psi_{f,-}(0)}
= \gamma {\Ai(- \gamma y_c,1) \over \Ai(-\gamma y_c,2)}
\Bigl[ 1 + o(1) \Bigr],
$$
where $\Ai(\cdot,1)$ and $\Ai(\cdot,2)$ are the first and second primitives of the Airy function
which vanish at infinity.
We introduce the Tietjens function, which is defined by
$$
\Ti(y) = \frac{\Ai(\xi,2)}{\xi \Ai(\xi,1)},
\qquad\xi = - i^{1/3} y,
$$
which gives
\beq \label{quotient1}
\frac{\partial_y \psi_{f,-}(0)}{ \psi_{f,-}(0)}
= - {1 \over y_c \Ti(\tilde \gamma y_c)} 
\Bigl[ 1 + o(1) \Bigr].
\eeq
Using the expansions at infinity of $\Ai$ \cite{DLMF}, we obtain the following expansion at infinity
$$
\Ti(y) =  {e^{i \pi / 4} \over y^{3/2}} \Bigl( 1 + {5 \over 4} {e^{i \pi / 4} \over y^{3/2}} 
+ {151 \over 32} {e^{i \pi / 2} \over y^3} + \cdots \Bigr).
$$
The case of the strip is similar, except that $\psi_{f}$ is evaluated at $y = 1$, which gives
$$
\frac{\partial_y \psi_{f}(0)}{ \psi_{f}(0)} = - {1 \over (1 - y_c) \Ti (\tilde \gamma (1 - y_c)) }
\Bigl[ 1 + o(1) \Bigr].
$$


\subsection{Study of the Rayleigh equation}


For slow solutions, it can be proved that the viscous term $\nu \alpha^{-1} (\partial_y^2 -\alpha^2)^2 \psi_{s,-}$
may be neglected in (\ref{OS}) and that $\psi_{s,-}$ may be approximated by a solution $\psi_R$ of the Rayleigh equation.
In particular, 
$$
\frac{\partial_y \psi_{f,-}(0)}{ \psi_{f,-}(0)} = \frac{\partial_y \psi_R(0)}{ \psi_R(0)} \Bigl[ 1 + o(1) \Bigr].
$$
We now compute the right hand side term in the half space and in the strip.


\subsubsection{Half space}


\begin{lemma}
Let $\psi_R$ be a solution of the Rayleigh equation which goes to $0$ as $y$ goes to infinity.
We have, as $\alpha$ and $c$ go to $0$,
\beq \label{halfs}
{\partial_y \psi_R(0) \over \psi_R(0)}
=  - {U_s'(0) \over c}   - {\alpha \over c^2} (U_+ - c)^2 + {\alpha^2 \over c^2} (U_+ - c)^4 \Omega_0(0,c)
+ O \Bigl( {\alpha^3 \over c^2} \Bigr)
\eeq
where
\beq \label{defiOmega0z}
\Omega_0(y,c) = -{1 \over ( U_+ - c)^2} \int_y^{+ \infty} \Bigl[
{(U_s(z) - c)^2 \over (U_+ - c)^2} - {(U_+ - c)^2 \over (U_s(z) - c)^2} \Bigr] \, dz . 
\eeq
\end{lemma}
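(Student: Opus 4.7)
The plan is a regular perturbation expansion in $\alpha$, set up so that the $e^{-\alpha y}$ decay is factored out from the outset. Write
\[
\psi_R(y) = e^{-\alpha y}\rho(y),
\]
and substitute into the Rayleigh equation (\ref{Rayleigh}); the $\alpha^2\rho$ coming from $\partial_y^2$ cancels the $-\alpha^2\rho$ term, leaving the equivalent equation
\[
(U_s-c)\rho'' - 2\alpha(U_s-c)\rho' = U_s''\,\rho.
\]
This form has a regular limit at $\alpha=0$, in contrast with $\psi_R$ itself, whose decay is borderline.

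Next, expand $\rho = \rho_0 + \alpha\rho_1 + \alpha^2\rho_2 + O(\alpha^3)$, with the unique normalization $\rho_0(+\infty)=U_+-c$ and $\rho_j(+\infty)=0$ for $j\geq 1$. At order $\alpha^0$ the equation is exactly the Rayleigh equation at $\alpha=0$, which admits $\rho_0(y)=U_s(y)-c$ as its bounded solution. At higher orders I would use the ansatz $\rho_j=(U_s-c)f_j$, which turns each equation into the conservation form
\[
\partial_y\bigl[(U_s-c)^2 f_j'\bigr] = \text{source}_j(y),
\]
where $\text{source}_1 = 2(U_s-c)U_s' = \partial_y[(U_s-c)^2]$ and $\text{source}_2$ involves $\rho_1'$. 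Integrating from $+\infty$ with the decay boundary condition, and choosing the integration constant so that $(U_s-c)^2 f_j'$ converges at infinity (which forces the constant to be $-(U_+-c)^2$ at order $1$), I obtain the explicit formulas
\[
f_1'(y) = 1 - \frac{(U_+-c)^2}{(U_s(y)-c)^2},\qquad f_1(y) = -\int_y^{+\infty}\!\Bigl[1 - \frac{(U_+-c)^2}{(U_s-c)^2}\Bigr]dz,
\]
and a similar, slightly longer, integral formula for $f_2$ and $f_2'$.

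Finally, I evaluate at $y=0$, using $U_s(0)=0$, so $\rho_0(0)=-c$, $\rho_0'(0)=U_s'(0)$, and
\[
\frac{\partial_y\psi_R(0)}{\psi_R(0)} = -\alpha + \frac{\rho'(0)}{\rho(0)}.
\]
Expanding the ratio $\rho'(0)/\rho(0)$ to order $\alpha^2$ produces the leading $-U_s'(0)/c$, plus $\alpha(f_1'(0)-1) = -\alpha(U_+-c)^2/c^2$ for the $O(\alpha)$ term, and an $O(\alpha^2)$ contribution of the form $g'(0)-f_1(0)f_1'(0)$ (with $g=f_2$). The remaining task is an algebraic identity: combining the integral expressions for $f_1(0)$, $f_1'(0)$, and $(U_s(0)-c)^2 g'(0) = c^2 g'(0)$ must be rearranged into a single integral of $(U_s-c)^4-(U_+-c)^4$ over $(U_s-c)^2$, which after multiplication by $-(U_+-c)^2/c^2$ is exactly $(U_+-c)^4\Omega_0(0,c)/c^2$. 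The main obstacle is this last algebraic repackaging: one has to be careful with the boundary terms at infinity when integrating $\partial_y[(U_s-c)^2 g']$, and with tracking the cancellation between $f_1(0)f_1'(0)$ and $g'(0)$ so that no spurious $\int(U_s-c)^2$ terms survive. Justifying the $O(\alpha^3/c^2)$ remainder is then a uniform bound on the next order in the expansion, using the exponential convergence of $U_s-U_+$ to ensure all integrals converge and the $1/(U_s-c)^2$ factors remain controlled by $1/c^2$ near $y=0$.
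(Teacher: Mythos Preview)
Your approach is correct but genuinely different from the paper's. You perform a direct linear perturbation expansion of the second-order Rayleigh equation after factoring out the decay $e^{-\alpha y}$; the paper instead introduces the Wronskian-type quantity
\[
\Omega(y)=\frac{\psi_R}{(U_s-c)\bigl[U_s'\psi_R-(U_s-c)\psi_R'\bigr]},
\]
which satisfies the first-order Riccati equation $\Omega'=\alpha^2 Y\,\Omega^2-Y^{-1}$ with $Y=(U_s-c)^2$ and boundary value $\Omega(+\infty)=\bigl[\alpha(U_+-c)^2\bigr]^{-1}$. Expanding this Riccati equation in $\alpha$ gives $\Omega=\bigl[\alpha(U_+-c)^2\bigr]^{-1}+\Omega_0+O(\alpha)$ in one stroke, and the integral defining $\Omega_0$ falls out directly from the zeroth-order equation $\Omega_0'=Y/(U_+-c)^4-Y^{-1}$. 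The paper's route is shorter because the Riccati variable already packages the ratio $\psi_R'/\psi_R$, so there is no need for the final algebraic recombination you flag as the ``main obstacle''; your route, on the other hand, is more elementary and keeps the problem linear at each order, which may make the rigorous justification of the $O(\alpha^3/c^2)$ remainder more transparent. For the record, your recombination does go through: integrating $\partial_y[(U_s-c)^2 f_2']=\partial_y[(U_s-c)^2 f_1]+(U_s-c)^2 f_1'$ from $y$ to $+\infty$ and evaluating at $0$ gives
\[
f_2'(0)-f_1(0)f_1'(0)=-\frac{1}{c^2}\int_0^{+\infty}\frac{(U_s-c)^4-(U_+-c)^4}{(U_s-c)^2}\,dz=\frac{(U_+-c)^4}{c^2}\,\Omega_0(0,c),
\]
with no extra multiplicative factor needed.
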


\begin{proof}
Following \cite{Reid}, we introduce
$$
\Omega(y) = {\psi_R(y) \over (U_s(y) - c) 
[ U_s'(y) \psi_R(y) - (U_s(y) - c){\partial_y \psi_R(y)} ] } 
$$
and note that, after some computations, $\Omega(y)$ satisfies the ordinary differential equation
\beq \label{equationOmega}
\Omega' = \alpha^2 Y \Omega^2 - Y^{-1}
\eeq
where $Y(y) = (U_s(y) - c)^2$.  
We note that the source term $Y^{-1}$ is singular when $U_s(y) = c$.
Moreover, in view of the asymptotic behavior of $\psi_R$,
$$
\lim_{y \to + \infty} \Omega(y) = {1 \over \alpha ( U_+ - c)^2} .
$$
This leads to the following expansion
$$
\Omega(y) = {1 \over \alpha (1 - c)^2} + \Omega_0(y)
+ O(\alpha),
$$
where $\Omega_0$ is defined in (\ref{defiOmega0z}).
Moreover,
$$
\Omega(0) = -  {\psi_R(0) \over c [ U'_s(0) \psi_R(0) + c \partial_z \psi_R(0) ]},
$$
which leads to (\ref{halfs}).
\end{proof}

As $\Im c \to 0$, we note that
\beq \label{imomega0}
\Im \Omega_0(0,c) \to - i \pi {U_c'' \over U_c'^3} (U_+ -c)^2,
\quad
\Re \Omega_0(0,c) \sim  - {1\over U_c'^2 y_c} + O(\log y_c).
\eeq


\subsubsection{Strip}


Similarly, we approximate $\partial_y \psi_s(\pm 1) / \psi_s(\pm 1)$ by $\partial_y \psi_R(\pm 1) / \psi_R(\pm)$
where $\psi_R$ is the even solution of Rayleigh equation \ref{Rayleigh}.

\begin{lemma}
Let $\psi_R$ be an even solution of the Rayleigh equation.
As $\alpha \to 0$ and $\nu \to 0$, we have
\begin{align*}
{\partial_y \psi_R(1) \over \psi_R(1)} &= - {U_s'(1) \over c} 
+ {\alpha^2 \over c^2} \int_0^1 (U_s(z) - c)^2 \, dz
- {\alpha^4 \over c^2} \int_0^1 {\omega_2^2(z) \over (U_s(z) - c)^2} \, dz
+ O \Bigl( {\alpha^6 \over c^2} \Bigr)
\end{align*}
where
$$
\omega_2(y) = \int_y^0 (U_s(z) - c)^2 \, dz .
$$
\end{lemma}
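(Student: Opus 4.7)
The plan is to mirror the proof of the half-space case, running the Riccati argument from the centerline $y=0$ (where symmetry provides a clean boundary datum) rather than from infinity. Writing $\phi=\psi_R/(U_s-c)$ and $Y=(U_s-c)^2$, the Rayleigh equation \eqref{Rayleigh} takes the self-adjoint form $(Y\phi')'=\alpha^2 Y\phi$. I would introduce
\[
\widetilde \Omega(y) := -\frac{Y(y)\,\phi'(y)}{\phi(y)},
\]
which is, up to sign, the reciprocal of the $\Omega$ used in the half-space proof. A short direct computation using $(Y\phi')'=\alpha^2 Y\phi$ gives the Riccati equation
\[
\widetilde \Omega' = \frac{\widetilde \Omega^2}{Y} - \alpha^2 Y .
\]
Evenness of $U_s$ and $\psi_R$ forces $U_s'(0)=\psi_R'(0)=0$, hence $\phi'(0)=0$ and $\widetilde \Omega(0)=0$, which is the right initial condition. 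Moreover, using $U_s(1)=0$,
\[
\frac{\partial_y \psi_R(1)}{\psi_R(1)} = \frac{U_s'(1)}{U_s(1)-c} + \frac{\phi'(1)}{\phi(1)} = -\frac{U_s'(1)}{c} - \frac{\widetilde \Omega(1)}{c^2},
\]
so the whole lemma reduces to an asymptotic expansion of $\widetilde \Omega(1)$.

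I would then postulate $\widetilde \Omega=\alpha^2\widetilde \Omega_1+\alpha^4\widetilde \Omega_2+O(\alpha^6)$ and match powers of $\alpha^2$ in the Riccati equation, enforcing $\widetilde \Omega_j(0)=0$ at each order. The $\alpha^2$ balance yields $\widetilde \Omega_1'=-Y$, hence $\widetilde \Omega_1(y)=-\int_0^y(U_s-c)^2\,dz=\omega_2(y)$. The $\alpha^4$ balance yields $\widetilde \Omega_2'=\widetilde \Omega_1^2/Y=\omega_2^2/(U_s-c)^2$, hence $\widetilde \Omega_2(y)=\int_0^y \omega_2(z)^2/(U_s(z)-c)^2\,dz$. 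Evaluating at $y=1$ and inserting into the identity above reproduces exactly the three explicit terms of the stated expansion, with $-\alpha^2\widetilde \Omega_1(1)/c^2=(\alpha^2/c^2)\int_0^1(U_s-c)^2\,dz$ and $-\alpha^4\widetilde \Omega_2(1)/c^2=-(\alpha^4/c^2)\int_0^1\omega_2^2/(U_s-c)^2\,dz$.

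The main obstacle is controlling the remainder. The coefficient $\widetilde \Omega_2(1)$ is a singular integral because $(U_s-c)^{-2}$ blows up at the critical layer $y_c$ defined by $U_s(y_c)=c$, which is close to $y=1$ when $c$ is small; for $\Im c>0$ the integrals still converge, but the estimates inherit negative powers of $c$ from that region. I would control the remainder $R=\widetilde \Omega-\alpha^2\omega_2-\alpha^4\widetilde \Omega_2$ by a Gronwall/contraction argument applied to the Riccati equation $R$ inherits: a direct substitution gives
\[
R' = \frac{2\alpha^2\omega_2+2\alpha^4\widetilde \Omega_2}{Y}\,R + \frac{R^2}{Y} + \frac{2\alpha^6\omega_2 \widetilde \Omega_2+\alpha^8\widetilde \Omega_2^2}{Y},
\]
with $R(0)=0$. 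Since the forcing is of order $\alpha^6$ and the coefficient multiplying $R$ is small, a bootstrap on $[0,1]$ combined with careful tracking of the critical-layer contributions (analogous to the limit \eqref{imomega0} used in the half-space case) yields the claimed $O(\alpha^6/c^2)$ bound and closes the proof.
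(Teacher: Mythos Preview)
Your proposal is correct and is essentially the same as the paper's proof: your $\widetilde\Omega=-Y\phi'/\phi$ coincides with the paper's $\omega=(U_s-c)\bigl(U_s'\psi-(U_s-c)\psi'\bigr)/\psi$, it satisfies the same Riccati equation $\omega'=-\alpha^2 Y+\omega^2/Y$ with $\omega(0)=0$, and your $\widetilde\Omega_1,\widetilde\Omega_2$ are exactly the paper's $\omega_2,\omega_4$. The only addition is your sketched Gronwall control of the $O(\alpha^6)$ remainder, which the paper simply asserts.
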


\begin{proof}
We introduce 
$$
\omega(y) = {1 \over \Omega(y)} = (U_s(y) - c) { U_s'(y) \psi(y) - (U_s(y) - c) \psi'(y) \over \psi(y)} .
$$
A direct computation shows that
\beq \label{equationomega}
\omega'(y) = - \alpha^2 Y(y) + {\omega^2(y) \over Y(y)},
\eeq
where $Y(y) = (U_s(y) - c)^2$.
Moreover, $\omega(0) = 0$ since we only consider even modes.
We look for an expansion of $\omega(y)$ of the form
$$
\omega(y) = \omega_2(y) \alpha^2 + \omega_4(y) \alpha^4 + O( \alpha^6) .
$$
This leads to
$$
\omega_2'(y) = Y(y), \qquad
\omega_4'(y) = {\omega_2^2(y) \over Y(y)}.
$$
The solutions which vanish at $y = 0$ are
$$
\omega_2(y) = \int_y^0 Y(z) \, dz,
\qquad 
\omega_4(y) = - \int_y^0 {\omega_2^2(z) \over Y(z) } dz.
$$
This leads to
\beq \label{expansionomega1}
\omega(1) = - \alpha^2  \int_0^1 (U_s(z) - c)^2 \, dz + \alpha^4  \int_0^1 {\omega_2^2(z) \over (U_s(z) - c)^2} \, dz + O(\alpha^6).
\eeq
We note that
$$
\omega(1) = - c U_s'(1) - c^2 {\partial_z \psi(1) \over \psi(1)},
$$
which leads to the dispersion relation.
Moreover, if $I(c)$ is an integral of the form
$$
I(c) = \int_{y_1}^0 {f(z) \over Y(z)} dz, 
$$
where $f$ is a smooth function,
then we note that
\begin{equation}\label{imic0}
\Im I(c) = - \pi {U_c'' \over (U_c')^3} f(y_c)  + {\pi \over (U_c')^2} f^{\prime}(y_c),
\quad 
\Re I(c) = - {f(y_c) \over U_1' c}  + O( \log c)
\end{equation}
as $c \to 0$.
\end{proof}


\section{The half space case}


Tollmien-Schlichting waves, which appear as $\nu$ goes to $0$, are characterized by a small
$\alpha$ and a small $c$, such that $\alpha / c$ is bounded. We do not prove this fact here
and just study the dispersion relation (\ref{disper1}) in the regime $\nu \to 0$, $c \to 0$,
$\alpha \to 0$ with $\alpha / c$ bounded.

Let $\Lambda$ be defined by
$$
y_c = {c \over U_s'(0)} \Lambda.
$$
We note that $\Lambda = 1 + O(c)$.
After rewritting, the dispersion relation (\ref{disper1}) becomes
\begin{equation} \label{approx1}
\Lambda \Ti(\tilde \gamma y_c) 
= 1 - {\alpha \over c} {(U_+ - c)^2 \over  U_s'(0)} + {\alpha^2 \over c} {(U_+ - c)^4 \over  U_s'(0)}
\Bigl[ {1 \over U_s'(0) c} + \Omega_0(0,c) \Bigr] + O \Bigl( { \alpha^3 \over c} \Bigr) .
\end{equation}
We now study this dispersion relation.

\begin{lemma}
Provided $\nu$ is small enough, (\ref{approx1}) has one solution $c$ with $\Im c > 0$ provided $\alpha_-(\nu) < \alpha < \alpha_+(\nu)$,
where, as $\nu \to 0$,
$$
\alpha_-(\nu) \sim C_-  {(U_s')^{5/4} \over U_+^{3/2}} \nu^{1/4},
\qquad
\alpha_+(\nu) \sim \Bigl({1 \over 2 \pi^2} {U_s'(0)^{11} \over U_1''(0)^2}\Bigr)^{1/6} \nu^{1/6},
$$
with $C_- \approx 1.005$. We also have
$| \alpha \Im c | \lesssim \nu^{1/2}$.
Moreover, when $\alpha$ is of order $\nu^{1/6}$, namely when $\alpha = \nu^{1/6} \tilde \alpha$
with $\tilde \alpha$ of order one,
we have,
$$
c = {U_+^2 \over U_s'(0)} \alpha \Bigl[ 1 + \nu^{1/6} \tilde \sigma \Bigr]
$$
where
$$
\tilde \sigma = {1 \over \tilde \alpha^2} {(U_s')^{5/2} \over U_+^3} e^{i \pi /4} 
- i \pi \tilde \alpha {U_s'' U_+^2 \over (U_s')^3}
+ O(\nu^{1/6}).
$$
In particular, near the upper marginal stability curve, $\Im c = O(\nu^{1/3})$.
%
\end{lemma}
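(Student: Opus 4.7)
The plan is to solve (\ref{approx1}) perturbatively near each marginal curve and to interpolate between the two regimes by a continuity argument. The crucial preliminary remark is that the cancellation (\ref{imomega0}) between $1/(U_s'(0)c)$ and the singular real part of $\Omega_0(0,c)$ makes the bracket $\tfrac{1}{U_s'(0)c}+\Omega_0(0,c)$ of order $\log c + i\cdot O(1)$, not of order $1/c$. Without this, the apparently order-one contributions ``$1$'', ``$\mu^2$'' (where $\mu := \alpha(U_+-c)^2/(cU_s'(0))$), and the singular real part of $\Omega_0$ in the right-hand side of (\ref{approx1}) would produce an uncompensated $O(1)$ term that cannot be matched to the small left-hand side $\Lambda\Ti$.

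Near the upper marginal curve, I would set $\alpha = \nu^{1/6}\tilde\alpha$ with $\tilde\alpha = O(1)$ and use the ansatz $c = (U_+^2/U_s'(0))\,\alpha\,(1+\nu^{1/6}\tilde\sigma)$ with $\tilde\sigma\in\cit$ to be determined. A direct computation gives $y_c = O(\nu^{1/6})$ and $\tilde\gamma = O(\nu^{-5/18})$, so $\tilde\gamma y_c$ blows up like $\nu^{-1/9}$ and the Tietjens asymptotic from the excerpt applies, yielding $\Lambda\Ti(\tilde\gamma y_c) = \nu^{1/6}e^{i\pi/4}(U_s'(0))^{5/2}/(\tilde\alpha^2 U_+^3) + O(\nu^{1/3})$. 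Treating (\ref{approx1}) as the quadratic $\alpha(U_+-c)^2 = cU_s'(0)\,\mu$ and solving for the root near $U_+^2\alpha/U_s'(0)$, then combining the Tietjens leading term with the imaginary part of $\Omega_0$ from (\ref{imomega0}), produces the announced expansion of $\tilde\sigma$. Setting $\Im\tilde\sigma = 0$ then determines the upper marginal curve: the two imaginary pieces $\sin(\pi/4)(U_s'(0))^{5/2}/(\tilde\alpha^2 U_+^3)$ and the $\pi\tilde\alpha U_s''(0)$-proportional term balance at $\tilde\alpha_+^6 = (U_s'(0))^{11}/(2\pi^2 (U_s''(0))^2)$ (up to $U_+$ normalization), and $\Im c = (U_+^2/U_s'(0))\alpha\,\nu^{1/6}\Im\tilde\sigma = O(\nu^{1/3})$ follows since $\alpha\sim\nu^{1/6}$ and $|\Im\tilde\sigma| = O(1)$.

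Near the lower marginal curve I would instead rescale $\alpha = \nu^{1/4}\hat\alpha$, so that $\tilde\gamma y_c$ is of order one and the full Tietjens function must be retained (no large-argument asymptotic is available). The condition $\Im c = 0$ then reduces to a transcendental equation in $\hat\alpha$ whose first positive real root is the numerical constant $C_-\approx 1.005$ in the statement. The bound $|\alpha\Im c|\lesssim\nu^{1/2}$ over the whole interval $[\alpha_-,\alpha_+]$ is then established piecewise: near $\alpha_+$ from $|\Im c|=O(\nu^{1/3})$ and $\alpha\lesssim\nu^{1/6}$; near $\alpha_-$ from $|\Im c|\leq|c|\sim\alpha\sim\nu^{1/4}$; and interpolation in between by continuity. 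Existence of exactly one unstable root $c$ for $\alpha\in(\alpha_-,\alpha_+)$ follows from the Implicit Function Theorem applied in neighborhoods of the two endpoints, together with continuous tracking of the branch and monotonicity of $\Im c$ as a function of $\alpha$ along it.

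The main obstacle is the cancellation highlighted at the outset: three separate contributions on the right-hand side of (\ref{approx1}) that are individually of order one cancel to produce a residual of order $\alpha\log\nu$, and keeping track of this while simultaneously resolving the quadratic structure of $\mu$ and accounting for the imaginary contribution from the critical layer requires the systematic use of (\ref{imomega0}) throughout. A secondary difficulty is the intermediate regime $\nu^{1/4}\ll\alpha\ll\nu^{1/6}$, where neither Tietjens asymptotic is sharp; there one cannot rely on a clean expansion and must instead use continuity and monotonicity of $\Im c$ along the solution branch to link the two marginal analyses.
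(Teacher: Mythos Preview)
Your plan is essentially the paper's: the same ansatz $c=(U_+^2/U_s'(0))\,\alpha(1+\sigma)$ near the upper curve, the same large-argument expansion of $\Ti$, the same balance of imaginary parts to locate $\alpha_+$, and the same use of the cancellation $\Re\Omega_0(0,c)=-1/(U_s'(0)c)+O(\log c)$ to tame the bracket in (\ref{approx1}).

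The one place where the paper is sharper is the lower marginal curve. You rescale $\alpha=\nu^{1/4}\hat\alpha$ and then say ``$\Im c=0$ reduces to a transcendental equation in $\hat\alpha$ whose first positive root is $C_-\approx 1.005$''; but on the marginal curve you have \emph{two} real equations (real and imaginary parts of (\ref{approx1})) in the \emph{two} real unknowns $(\hat\alpha,\hat c)$, and you do not say how to decouple them. The paper's device is a dichotomy: since $z=\tilde\gamma y_c$ is real on any marginal curve and (\ref{imTi}) forces $\Im\Ti(z)\to 0$, either $z\to+\infty$ (upper branch) or $z\to z_0\approx 2.297$, the unique real zero of $\Im\Ti$. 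In the second case $\Re\Ti(z_0)\approx 0.5644$ combined with (\ref{reTi}) fixes the ratio $\alpha/c$, and then the constraint $\tilde\gamma y_c=z_0$ yields $\alpha\sim C_-\nu^{1/4}(U_s')^{5/4}/U_+^{3/2}$ with $C_-$ an explicit algebraic combination of $z_0$ and $\Re\Ti(z_0)$. This simultaneously explains why there are exactly two marginal regimes and produces the constant without any further root-finding; your plan should incorporate this step rather than leave it as an unspecified numerical solve.
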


\begin{proof}
We first discuss the marginal stability curves, defined by $\Im c = 0$.
In this case, $y_c$ is real, and thus $z = \tilde \gamma y_c$ is real. We have
$$
\Re \Omega_0(0,c) = - {1 \over U_s'(0) c} + O (\log c).
$$
This leads to
\beq \label{reTi}
\Re \Ti(z) = 1 - {U_+^2 \over U_s'(0)} {\alpha \over c}  + O \Bigl( { \alpha^2 \log c \over c} \Bigr) + O(\gamma^{-2})
\eeq
and, using (\ref{imomega0}),
\beq \label{imTi}
\Im \Ti(z) \sim - \pi {\alpha^2 \over  c} {U_s''(0) U_+^4 \over U_s'^4(0)} .
\eeq
As the Tietjens function is bounded on the real line, this implies that $\alpha c^{-1}$ is bounded. Hence, as $\alpha \to 0$, $\Im \Ti(z) \to 0$.
Thus, either $z \to + \infty$ or $z \to z_0$, the only real zero of the Tietjens function
of the real line, which approximately equals $2.297$.

Let us study the first possibility. In this case, as $z \to + \infty$, $\Re \Ti(z) \to 0$.
We thus define $\sigma$ by
$$
c \sim { U_+^2 \over  U_s'(0)} \alpha (1 + \sigma).
$$
Then (\ref{approx1}) becomes
\begin{equation} \label{approx2}
\Ti(\tilde \gamma y_c) = 1 - {1 \over 1 + \sigma} - {i \pi \alpha \over 1 + \sigma}
{U_s'' U_+^2 \over (U_s')^3}.
\end{equation}
Using the expansion of $\Ti$ at infinity, we get
\begin{equation} \label{approx3}
- {\nu^{1/2} \over \alpha^2} {(U_s')^{5/2} \over U_+^3} {e^{i \pi /4} \over (1 + \sigma)^{3/2}}
= 1 - {1 \over 1 + \sigma} - {i \pi \alpha \over 1 + \sigma}
{U_s'' U_+^2 \over (U_s')^3}.
\end{equation}
In the regime $|\sigma| \ll 1$, this gives
\begin{equation} \label{approx4}
\sigma = {\nu^{1/2} \over \alpha^2} {(U_s')^{5/2} \over U_+^3} e^{i \pi /4} 
- i \pi \alpha {U_s'' U_+^2 \over (U_s')^3}.
\end{equation}
We note that $\sigma$ is real when 
$$
{\nu^{1/2} \over \alpha^2} {(U_s')^{5/2} \over U_+^3} 
= \sqrt{2} \pi \alpha {U_s'' U_+^2 \over (U_s')^3},
$$
namely when
\beq \label{branch2}
\alpha^6 = {1 \over 2 \pi^2} {U_s'(0)^{11} \over U_1''(0)^2} \nu .
\eeq
In this case, we are on the ``upper marginal stability curve".

In view of (\ref{branch2}), we rescale $\alpha$ and $\sigma$ according to
$$
\alpha = \nu^{1/6} \tilde \alpha, \qquad \sigma = \nu^{1/6} \tilde \sigma.
$$
Then (\ref{approx4}) gives
\begin{equation} \label{approx5}
\tilde \sigma = {1 \over \tilde \alpha^2} {(U_s')^{5/2} \over U_+^3} e^{i \pi /4} 
- i \pi \tilde \alpha {U_s'' U_+^2 \over (U_s')^3}.
\end{equation}
We note that $\Re c$ is of order $\nu^{1/6}$, whereas $\Im c$ is of order $\nu^{1/3}$.
Moreover, $y_c$ is also of order $\nu^{1/6}$, whereas $\gamma$ is of order $\nu^{-5 /18}$:
The critical layer is at a distance of order $\nu^{1/6}$ from the boundary. 
Its size, of order $\gamma^{-1} \sim \nu^{5/18}$, is much smaller
than its distance from the boundary: the critical layer is ``detached" from the boundary.

Let us now turn to the second possibility, namely to the case when $z \to z_0$. 
We have $\Ti(z_0) \approx 0.5644$.
Thus, on the marginal curve, we have
$$
0.5644 \approx 1 - {U_+^2 \over U_s'} {\alpha \over c}.
$$
In this case,
$$
c \approx 2.2959 {U_+^2 \over U_s'(0)} \alpha.
$$
As $\tilde \gamma y_c = z_0$, this gives
$$
{\alpha^{1/3} \over \nu^{1/3}} {c \over  (U_s')^{2/3}} = z_0,
$$
namely
$$
\alpha \approx 1.005 \nu^{1/4} {(U_s')^{5/4} \over U_+^{3/2}} . 
$$
In this case, $\alpha$, $c$ and $\gamma^{-1}$ are all of order $\nu^{1/4}$.
The critical layer is at a distance $\nu^{1/4}$ from the boundary, and its size is of the same magnitude. 
This case is referred to as the ``lower marginal stability" regime.

In this regime, the third term of the right hand side of (\ref{approx1}) can
be neglected and the dispersion relation can further be simplified into
\begin{equation} \label{approx10}
\Ti\left({\tilde \alpha^{1/3} \tilde c \over {U_s'}^{2/3}}\right)
= 1 - {\tilde \alpha \over \tilde c} {U_+^2 \over  U_s'}
\end{equation}
where $\tilde \alpha = \alpha \nu^{-1/4}$ and $\tilde c = c \nu^{-1/4}$.  When $\tilde \alpha \to + \infty$, we observe that $\tilde c \to + \infty$. We can then approximate the
Tietjens function $\Ti$ by its asymptotic expansion, which leads to
$$
 {e^{i \pi/4} U_s'\over (\tilde \alpha^{1/3} \tilde c)^{3/2}} \sim 1 
 - {\tilde \alpha \over \tilde c} {U_+^2 \over  U_s'}.
$$
We introduce $\sigma$ such that
$\tilde \alpha = \tilde c U_s' U_+^{-2}  (1 - \sigma)$,
which gives
$$
\sigma \sim {e^{i \pi /4} {U_s'}^{5/2}\over \tilde \alpha^2 U_+^3} (1 - \sigma)^{3/2},$$
thus
$$
\sigma \sim {e^{i \pi /4} {U_s'}^{5/2}\over \tilde \alpha^2 U_+^3}.
$$
This relation is of course valid as long as $\alpha \ll \nu^{1/6}$ where the third term of (\ref{approx1}) can no longer be neglected.
\end{proof}


\section{The strip case}


We repeat the previous analysis in the case of the strip.
After rewritting, we obtain
\begin{align} \label{approxs12}
\Lambda    \Ti&\Bigl( \tilde \gamma (1-y_c) \Bigr) 
= -1 -{\alpha^2 \over U_s'(1) c} \int_0^1 (U_s(z) - c)^2 \, dz +O \Bigl( {\alpha^6 \over c} \Bigr)\\
    & + \left( {\alpha^4 \over U_s^{\prime}(1) c} \int_0^1 {\omega_2^2(z) \over (U_s(z) - c)^2} \, dz
    - \frac{\alpha^4}{c^2 U_s^{\prime}(1)^2}\left(\int_0^1 (U_s(z) - c)^2 d\, z\right)^2 \right) .
\end{align}
As previously, we study the regime $\nu \to 0$, $\alpha \to 0$ and $c \to 0$, with $\alpha / c$ bounded.

\begin{lemma}
Provided $\nu$ is small enough, (\ref{approxs12}) has one solution $c$ with $\Im c > 0$ provided
$$
\alpha_-(\nu) < \alpha < \alpha_+(\nu)
$$
where, as $\nu \to 0$,
\begin{align*}
\alpha_-(\nu) & \sim  C_- \vert U_s' \vert^{5/7} 
\Bigl( \int_{0}^{+1} U_s^2(y) \, dy \Bigr)^{-3/7} \nu^{1/7}, \\
\alpha_+(\nu) & \sim (2\pi^2)^{-1/11} { \vert U_s' \vert \over ( U_1'')^{2/11}}
\Bigl( \int_{0}^{+1} U_s^2(y) \, dy \Bigr)^{-5/11} \nu^{1/11},
\end{align*}
with $C_- \approx 1.7302$.
Moreover, we have $| \alpha \Im c | \lesssim \nu^{5/11}$.
\end{lemma}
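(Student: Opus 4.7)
The strategy mirrors that of Lemma 3.1 in the half-space case, with two essential new features: a cancellation between the explicit $-\alpha^4 A^2/(c^2 U_s'(1)^2)$ term and the near-singular part of $\alpha^4 B/(c\,U_s'(1))$, and the leading balance $c\sim\alpha^2$ (rather than $c\sim\alpha$). Throughout, I abbreviate $A=\int_0^1(U_s-c)^2\,dz$, $B=\int_0^1\omega_2^2/(U_s-c)^2\,dz$, and $s=-\alpha^2 A/(U_s'(1)\,c)$.

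The first step is to identify the correct leading balance. Naively, the right-hand side of (\ref{approxs12}) reads $-1+s-s^2+\alpha^4 B/(c\,U_s'(1))$, but $-1+s-s^2=0$ has no real roots, so this cannot be the leading balance for real $c$. The resolution is that $B$ itself contains a near-singular contribution from the critical layer. Since $\omega_2(y_c)=-A+O((1-y_c)^3)$ and the leading behavior of $\omega_2^2/(U_s-c)^2$ near $y_c$ is $\omega_2(y_c)^2/(U_s'(y_c)^2(y-y_c)^2)$, combining $\int_0^1 1/(y-y_c-i0)^2\,dy \sim -1/(1-y_c)$ with $(1-y_c)\approx c/|U_s'(1)|$ yields $\alpha^4 B/(c\,U_s'(1))=s^2+\alpha^2\,\Phi(c)+O(\alpha^4)$, with $\Phi$ a bounded function whose imaginary part, computed through (\ref{imic0}) applied to the regular subleading terms of $B$, is of order $\pi U_1''\,A/|U_s'(1)|^3$. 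The $-s^2$ and $+s^2$ terms then cancel, and the dispersion relation reduces to
$$\Lambda\,\Ti(\tilde\gamma(1-y_c)) = -1 + s + \alpha^2\,\Phi(c) + O(\alpha^4).$$

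The leading real balance gives $s\approx 1$, i.e., $c\approx c_0\,\alpha^2$ with $c_0=\left(\int_0^1 U_s^2\,dy\right)/|U_s'(1)|$. The marginal stability curves then split into two branches exactly as in the half-space case. On the upper branch, $z:=\tilde\gamma(1-y_c)\to\infty$, and $\Ti(z)\sim e^{i\pi/4}/z^{3/2}$; since $1-y_c\sim c_0\alpha^2/|U_s'(1)|$, one has $|z|\sim \alpha^{7/3}/\nu^{1/3}$, and balancing $|\Im\Ti(z)|\sim \nu^{1/2}/\alpha^{7/2}$ against $\alpha^2|\Im\Phi|\sim\pi\alpha^2 U_1''\,A/|U_s'(1)|^3$ yields $\alpha^{11}\sim \nu\,|U_s'(1)|^{11}/(2\pi^2 U_1''^2 A^5)$, whence $\alpha_+(\nu)\sim(2\pi^2)^{-1/11}|U_s'|/(U_1''^{2/11}A^{5/11})\,\nu^{1/11}$. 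On the lower branch, $z$ stays bounded at a specific value $z_*$ (analogous to $z_0\approx 2.297$ in the half-space case) determined by the vanishing of $\Im\Ti$; combining $z_*=\tilde\gamma(1-y_c)$ with $1-y_c\approx c_0\alpha^2/|U_s'(1)|$ yields $\alpha^7\sim \nu\,|U_s'|^5/A^3$, i.e., $\alpha_-(\nu)\sim(s_*z_*)^{3/7}|U_s'|^{5/7}A^{-3/7}\nu^{1/7}$ with $s_*=1+\Ti(z_*)$, and the numerical value $C_-\approx 1.7302$ matches $(1.5644\cdot 2.297)^{3/7}$.

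Existence and uniqueness of one solution $c$ with $\Im c>0$ for $\alpha\in(\alpha_-(\nu),\alpha_+(\nu))$ then follow from a continuity and implicit-function argument in $(\alpha,\nu)$, directly analogous to the half-space case. The bound $|\alpha\,\Im c|\lesssim \nu^{5/11}$ is extracted by observing that $\Im c$ is largest near the upper marginal curve, where $\Im c\sim\nu^{5/11}/\alpha$. The main obstacle is the cancellation in Step 1: it is not transparent a priori that the nominally-leading $-s^2$ and the singular part of $\alpha^4 B/(c\,U_s'(1))$ combine into $O(\alpha^2)$, and without this cancellation the naive balance $-1+s-s^2=0$ has no real root, so one cannot access the regime $c\sim\alpha^2$ nor perform the imaginary-part balance that produces the exponent $1/11$ on the upper marginal curve.
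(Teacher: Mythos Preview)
Your proposal is correct and follows essentially the same line as the paper's proof: restrict to the marginal curves $\Im c=0$, note that either $z=\tilde\gamma(1-y_c)\to\infty$ or $z\to z_0$ (the real zero of $\Im\Ti$), and balance the resulting terms to extract the $\nu^{1/11}$ and $\nu^{1/7}$ scalings. The numerical check $C_-=(s_* z_0)^{3/7}=(1.5644\cdot 2.297)^{3/7}\approx 1.7302$ and the upper-curve balance $\nu^{1/2}/\alpha^{7/2}\sim \alpha^2$ agree with the paper.

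The one genuine difference is presentational but worth noting. You single out, as the crux, the cancellation between the explicit $-s^2$ term and the near-singular part of $\alpha^4 B/(c\,U_s'(1))$, and you derive it by isolating the boundary contribution $-\omega_2^2(y_c)/[U_1'^2(1-y_c)]$ in $B$. The paper does not flag this cancellation; it simply writes down $\Re B$ using (\ref{imic0}) and substitutes, arriving directly at $\Re\Ti(z)=-1-\alpha^2\omega_2(1)/(U_1'c)+O(\alpha^4\log c/c)$ (which is your $-1+s+O(\cdot)$ in different notation). Your version is more transparent about \emph{why} the $\alpha^4$ terms drop to $O(\alpha^2)$; the paper's is more compact but hides the mechanism. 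Both routes give the same reduced dispersion relation and the same two-branch analysis, so the approaches coincide in substance.
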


\begin{proof}
We first discuss the marginal stability curves, defined by $\Im c = 0$.
In this case, $1-y_c$ is real, and thus $z = \tilde \gamma (1-y_c)$ is real. We have
$$
\Re \left(\int_0^1 {\omega_2^2(z) \over (U_s(z) - c)^2} \, dz \right) = - {\omega_2^2(1) \over U_1' c}  + O( \log c).
$$
This leads to
\beq \label{reTistrip}
\Re \Ti(z) = -1 - {\alpha^2  \over U_1' c} \omega_2(1)   - O \Bigl( { \alpha^4 \log c \over c} \Bigr)
\eeq
and, using (\ref{imic0}),
\beq\label{imTistrip}
\Im \Ti(z) \sim  {\pi\alpha^4  U_c'' \over U_1'^4 c}  \omega_2^2(1).
\eeq
As the Tietjens function is bounded on the real line, this implies that $\alpha^2 c^{-1}$ is bounded. Hence, as $\alpha \to 0$, $\Im \Ti(z) \to 0$.
Thus, either $z \to + \infty$ or $z \to z_0$, the only real zero of the Tietjens function.

If $z \to + \infty$ then $\Re \Ti(z) \to 0$.
We thus define $\sigma$ by
$$
c \sim - { \alpha^2 \over U_1^\prime} \omega_2(1) (1 + \sigma).
$$
Then (\ref{approxs12}) becomes
\begin{equation} \label{approx5}
\Ti \Bigl(\tilde \gamma  (1- y_c) \Bigr) 
= - 1 + {1 \over 1 + \sigma} + {i \pi\alpha^2 \omega_2(1) U_c'' \over (1+\sigma) {U_1^\prime}^3 }.
\end{equation}

Using the expansion of $\Ti$, we get
\begin{equation} \label{approx6}
 {\nu^{1/2} \over \alpha^{7/2}} {(U_s')^{5/2} \over \omega_2(1)^{3/2}} {e^{i \pi /4} \over (1 + \sigma)^{3/2}}
= - 1 + {1 \over 1 + \sigma} + {i\pi\alpha^2 \omega_2(1)U_c''\over (1+\sigma) {U_1^\prime}^3 }.
\end{equation}
In the regime $|\sigma| \ll 1$, this gives
\begin{equation} \label{approx7}
\sigma = -{\nu^{1/2} \over \alpha^{7/2}} {(U_s')^{5/2} \over \omega_2(1)^{3/2}} e^{i \pi /4} 
+ {i\pi\alpha^2 \omega_2(1)U_c''\over  {U_1^\prime}^3 }.
\end{equation}
We note that $\sigma$ is real when 
\begin{equation}\label{equ:upperCurve}
{\nu^{1/2} \over \alpha^{7/2}} {(U_s')^{5/2} \over \omega_2(1)^{3/2}} 
= {\sqrt{2}\pi\alpha^2 \omega_2(1) U_c'' \over  {U_1^\prime}^3 } ,
\end{equation}
namely when
$\alpha^{11} \sim \omega_2(1)^{-5}\nu$.
In this case, we are on the ``upper marginal stability curve".
We rescale $\alpha$ and $\sigma$ according to
$\alpha = \nu^{1/11} \tilde \alpha$ and $\sigma = \nu^{2/11} \tilde \sigma$.
Then (\ref{approx7}) gives
\begin{equation}
\tilde \sigma = -\tilde\alpha^{-7/2} {(U_s')^{5/2} \over \omega_2(1)^{3/2}} e^{i \pi /4} 
+  {i\pi\tilde\alpha^2 \omega_2(1)U_c''\over  {U_1^\prime}^3 }.
\end{equation}
We note that $\Re c$ is of order $\nu^{2/11}$, whereas $\Im c$ is of order $\nu^{4/11}$.
Moreover, $1-y_c$ is also of order $\nu^{2/11}$, whereas $\gamma$ is of order $\nu^{-10 /33}$

Let us now turn to the second possibility, namely to the case when $z \to z_0$. 
We have $\Ti(z_0) \approx 0.5644$.
Thus, on the marginal curve, we have
$$
0.5644 \approx -1 - {\alpha^2  \over U_1' c} \omega_2(1).
$$
In this case,
$$
c \approx -0.6392 {\alpha^2 \over U_1'} \omega_2(1).
$$
As $\tilde \gamma (1 - y_c) = z_0$, this gives
$$
{\alpha^{1/3} \over \nu^{1/3}} {c \over  (U_s')^{2/3}} = z_0,
$$
which gives
$$
\alpha \approx 1.7302 \nu^{1/7} {\vert U_s' \vert^{5/7} \over \omega_2(1)^{3/7}}
$$
and ends the proof.
\end{proof}



\subsubsection*{Acknowledgments} D. Bian is supported by NSFC under the contract 12271032.

\subsubsection*{Conflict of interest}  The authors state that there is no conflict of interest.

\subsubsection*{Data availability}  Data are not involved in this research paper.



\begin{thebibliography}{99}

\bibitem{BG1} D. Bian, E. Grenier: Spectrum of Orr-Sommerfeld in the half space, {\it preprint}, 2025.

\bibitem{Zhang} 
Q. Chen, D. Wu, Z. Zhang: Tollmien-Schlichting waves near neutral stable curve, 
{\it preprint}, 2025.

\bibitem{Reid}  P. G.  Drazin, W. H. Reid:  Hydrodynamic stability, {\it  Cambridge Monographs on Mechanics and Applied Mathematics}. Cambridge University, Cambridge--New York, 1981.

\bibitem{DLMF} 
    NIST Digital Library of Mathematical Functions. https://dlmf.nist.gov/, Release 1.2.4 of 2025-03-15. F. W. J. Olver, A. B. Olde Daalhuis, D. W. Lozier, B. I. Schneider, R. F. Boisvert, C. W. Clark, B. R. Miller, B. V. Saunders, H. S. Cohl, and M. A. McClain, eds.



\bibitem{Guo}  E. Grenier, Y. Guo, and T. Nguyen: Spectral instability of characteristic boundary layer flows, {\it Duke Math. J., } 165(16), 3085--3146, 2016.

\bibitem{Reid2} W. H. Reid: The stability of parallel flows, {\it Developments in Fluid dynamics}, Vol $1$, Academic Press, 1965.

\bibitem{Schmidt} P.J. Schmid, D.S. Henningson: Stability and transition in shear flows,  {\it Applied Mathematical Sciences}, 142,  Springer-Verlag, New York,  2001.

\end{thebibliography}
\end{document}